\newtheorem{theorem}{Theorem} 
\newtheorem{lemma}{Lemma} 
\theoremstyle{definition}
\theoremstyle{remark}
\renewcommand{\P}{\mathbf{P}\,}
\newcommand{\C}{\mathbb{C}}
\begin{document} 
\title[Roots of Random Polynomials]{On the Distribution of Complex Roots of Random Polynomials with Heavy-tailed Coefficients} 
\author{F. G\"otze, D. Zaporozhets} 
\begin{abstract} 
 
Consider a random polynomial $G_n(z)=\xi_nz^n+\dots+\xi_1z+\xi_0$ 
 with i.i.d. complex-valued coefficients. 
Suppose that the distribution of $\log(1+\log(1+|\xi_0|))$ has a slowly varying tail. Then the distribution of the complex roots of $G_n$ concentrates in probability, as $n\to\infty$, to two centered circles and is uniform in the argument as $n\to\infty$. The radii of the circles are $|\xi_0/\xi_\tau|^{1/\tau}$ and $|\xi_\tau/\xi_n|^{1/(n-\tau)}$, where $\xi_\tau$ denotes the coefficient with the maximum modulus.

\emph{Key words and concepts:} roots of a random polynomial, roots concentration, heavy-tailed coefficients

\end{abstract} 
 
 
\thanks{Partially supported by RFBR (10-01-00242), RFBR-DFG (09-0191331), NSh-4472.2010.1, and CRC 701 ``Spectral Structures and Topological Methods in Mathematics''} 
 \maketitle

\section{Introduction} 
Consider the sequence of random polynomials 
$$ 
G_n(z)=\xi_n z^n+\xi_{n-1} z^{n-1}+\dots +\xi_1z+\xi_0, 
$$ 
where $\xi_0,\xi_1,\dots,\xi_n,\dots$ are i.i.d. real- or complex-valued random variables. We would like to investigate the behaviour of the complex roots of $G_n$. 
 
The first results in this questions  are due to Hammersley  \cite{jH56}. He derived an explicit formula for the $r-$point correlation function ($1\leqslant r\leqslant n$) of the roots of $G_n$  when the coefficients have an arbitrary joint distribution. 

Shparo and Shur \cite{SS62} showed that under quite general assumptions the roots of $G_n$ concentrate near the unit circle as $n$ tends to $\infty$ with asymptotically uniform distribution of the argument. More precisely, denote by $R_n (a,b)$ respectively  $S_n(\alpha,\beta)$ the  number of the roots of $G_n$ contained in the ring $\{z\in\mathbb{C}:a\leq |z|\leq b\}$ respectively   the sector $\{z\in\mathbb{C}:\alpha\leq\arg z\leq\beta\}$. For  $\varepsilon >0, m\in\mathbb{Z}_+$ consider the function  
$$ 
f(t)=\left[\underbrace{\log^+\log^+\dots\log^+t}_{m+1}\right]^{1+\varepsilon}\cdot\prod_{k=1}^m\underbrace{\log^+\log^+\dots\log^+t}_{k}, 
$$ 
where $\log^+s=\max(1,\log s)$.  If for some $\varepsilon>0, m\in\mathbb{Z^+}$ 
$$ 
\mathbf{E}f(|\xi_0|)<\infty , 
$$ 
 then for any $\delta\in(0,1)$ and any $\alpha,\beta$ such that $-\pi\leqslant\alpha<\beta\leqslant\pi$ 
$$ 
\frac1n 
R_n(1-\delta,1+\delta)\stackrel{\mathbf{P}}{\longrightarrow}1,\quad n\to\infty, 
$$ 
$$ 
 \frac1n S_n(\alpha,\beta)\stackrel{\mathbf{P}}{\longrightarrow}\frac{\beta-\alpha}{2\pi},\quad n\to\infty. 
$$ 
 
Ibragimov and Zaporozhets \cite{IZ11} improved this result as follows. They showed that 
$$ 
\P\left\{\frac1nR_n(1-\delta,1+\delta)\underset{n\to\infty}{\longrightarrow}1\right\}=1 
$$ 
holds for any $\delta\in(0,1)$ if and only if 
$$ 
\mathbf{E}\log(1+|\xi_0|)<\infty. 
$$ 
They also proved that for any $\alpha,\beta$ such that  $-\pi\leqslant\alpha<\beta\leqslant\pi$ 
$$ 
\P\left\{\frac1nS_n(\alpha,\beta)\underset{n\to\infty}{\longrightarrow}\frac{\beta-\alpha}{2\pi}\right\}=1 
$$ 
holds for {\em any} distribution of $\xi_0$. 
 
Shepp and Vanderbei \cite{SV95} considered real-valued standard Gaussian coefficients and proved that  
$$ 
\frac1n \mathbf{E}R_n(e^{-\delta/n},e^{\delta/n})\longrightarrow \frac{1+e^{-2 \delta}}{1-e^{-2 \delta}}-\frac 1{\delta},\quad n\to\infty 
$$ 
for any $\delta>0$. Ibragimov and Zeitouni \cite{IZ95} extended this relation to the case of arbitrary i.i.d. coefficients from the domain of attraction of an $\alpha$-stable law: 
\begin{equation}\label{1933} 
\frac1n \mathbf{E}R_n(e^{-\delta/n},e^{\delta/n})\longrightarrow \frac{1+e^{-\alpha \delta}}{1-e^{-\alpha\delta}}-\frac2{\alpha\delta},\quad n\to\infty. 
\end{equation} 
It is interesting to consider the limit case when  $\alpha\to0$. Then  
$$ 
\frac{1+e^{-\alpha \delta}}{1-e^{-\alpha\delta}}-\frac2{\alpha\delta}\longrightarrow 0 
$$ 
and a natural assumption for the coefficient distribution would be a slowly varying tail. In this case \eqref{1933} becomes 
$$ 
\frac1n \mathbf{E}R_n(e^{-\delta/n},e^{\delta/n})\longrightarrow 0,\quad n\to\infty. 
$$ 
This result (in a slightly stronger form) is proved in Theorem \ref{1934}.

In contrast to the concentration near the unit circumference, there exist random polynomials with quite a different asymptotic behavior of complex roots. Zaporozhets \cite{dZ05} constructed a random polynomial with i.i.d. coefficients such that in average $n/2+o(1)$ of the complex roots concentrate near the origin and the same number tends to infinity as $n\to\infty$ (moreover, the expected number of {\it real}  roots of this polynomial is at most 9 for all $n$). Theorem~\ref{2147} generalizes this result. 
 
The paper is organized as follows. In Sect.~\ref{2150} we formulate our results. In Sect.~\ref{2152} we prove some auxiliary lemmas. The theorems are proved in Sect.~\ref{2208}.

By $\sum_{j}$ we always denote a summation taken over all $j$ from $\{0,1,\dots,n\}$. If conditions are stated for the summation, they are applied to this default range $j$  from  $\{0,1,\dots,n\}$.
 
\section{Results}\label{2150} 
 
For the sake of simplicity, we assume that $\P\{\xi_0=0\}=0$.  To treat the general case it is enough to study in the same way the behavior of the roots on the sets $\{\alpha_n =k,\beta_n=l\}$, where 
$$ 
\alpha_n=\max\{j=0,\dots,n\,:\,\xi_{j}\ne0\},\qquad 
\beta_n=\min\{j=0,\dots,n\,:\,\xi_{j}\ne0\}. 
$$ 
\begin{theorem}\label{1934} 
If the distribution of $|\xi_0|$ has a slowly varying tail, then for any  $\delta>0$ 
$$ 
\P\{R_n(e^{-\delta/n},e^{\delta/n})=0\}\longrightarrow 1,\quad n\to\infty. 
$$ 
\end{theorem}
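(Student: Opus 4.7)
The plan is to combine a deterministic lower bound on $|G_n(z)|$ on the target annulus with a probabilistic statement that, under slow variation of the tail, a single coefficient dominates the sum of the moduli of all the others.

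Write $M_n := \max_{0 \leq j \leq n} |\xi_j|$, $S_n := \sum_{j} |\xi_j|$, and let $\tau$ denote any index with $|\xi_\tau| = M_n$. For $z$ in the annulus $e^{-\delta/n} \leq |z| \leq e^{\delta/n}$, every $j \in \{0,\dots,n\}$ satisfies $e^{-\delta} \leq |z|^j \leq e^{\delta}$, so the triangle inequality gives
$$
|G_n(z)| \;\geq\; |\xi_\tau|\, |z|^\tau - \sum_{j \neq \tau} |\xi_j|\, |z|^j \;\geq\; e^{-\delta} M_n - e^{\delta} (S_n - M_n).
$$
This right-hand side is strictly positive on the event
$$
A_n := \bigl\{M_n/S_n > e^{2\delta}/(1 + e^{2\delta})\bigr\},
$$
so on $A_n$ the polynomial $G_n$ has no root in the annulus, and it suffices to show $\P\{A_n\} \to 1$.

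Since $e^{2\delta}/(1 + e^{2\delta}) < 1$, this reduces to proving the classical fact that $M_n/S_n \stackrel{\mathbf{P}}{\longrightarrow} 1$ for i.i.d.\ variables with slowly varying tail --- equivalently, that such distributions are subexponential, so the maximum summand dominates the sum. A direct derivation uses Karamata's theorem: writing $L(t) := \P\{|\xi_0| > t\}$, slow variation of $L$ yields
$$
\E\bigl[|\xi_0|\, \ind_{|\xi_0| \leq t}\bigr] = \int_0^t L(s)\,ds - t L(t) = o\bigl(t L(t)\bigr), \qquad t \to \infty.
$$
Conditionally on $M_n$ and $\tau$, the remaining $n$ moduli $|\xi_j|$ for $j \ne \tau$ are i.i.d.\ copies of $|\xi_0|$ conditioned to be $\leq M_n$; combining the preceding estimate with Markov's inequality and the standard fact that $nL(M_n) \to \mathrm{Exp}(1)$ in distribution (so in particular $nL(M_n)$ is stochastically bounded) gives $(S_n - M_n)/M_n \stackrel{\mathbf{P}}{\longrightarrow} 0$, which is equivalent to $M_n/S_n \stackrel{\mathbf{P}}{\longrightarrow} 1$.

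The deterministic Step 1 is essentially trivial; the main work lies in Step 2, namely in extracting the quantitative estimate $(S_n - M_n)/M_n = o_{\mathbf{P}}(1)$ from the pointwise Karamata asymptotics. The delicate point is that $M_n$ is random and $nL(M_n)$ has a non-degenerate (not vanishing) limit distribution, so the conditioning argument must be carried out with care. This statement, or a stronger variant needed for the more refined theorems advertised in the abstract concerning the concentration on two circles of radii $|\xi_0/\xi_\tau|^{1/\tau}$ and $|\xi_\tau/\xi_n|^{1/(n-\tau)}$, is presumably established as one of the auxiliary lemmas in Section~\ref{2152}.
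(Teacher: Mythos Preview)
Your proof is correct and follows the same overall strategy as the paper: both reduce the problem to the probabilistic input $M_n/S_n \overset{\mathbf P}{\to} 1$, which is exactly the content of Lemma~\ref{0728}(a) (Darling's theorem), and then show deterministically that on the event where a single coefficient dominates, $G_n$ has no root in the annulus.

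The one difference is in the deterministic step. You argue directly: for $e^{-\delta/n}\leqslant |z|\leqslant e^{\delta/n}$ the triangle inequality gives $|G_n(z)|\geqslant e^{-\delta}M_n-e^{\delta}(S_n-M_n)>0$ on your event $A_n$, so $G_n$ cannot vanish there. The paper instead passes through the associated polynomial $\tilde G(z)=\sum_{j\ne\tau}|\xi_j|z^j-|\xi_\tau|z^\tau$, shows (by essentially the same inequality) that its two positive roots satisfy $r<e^{-\delta/n}<e^{\delta/n}<R$, and then invokes Pellet's theorem (Lemma~\ref{1354}) to conclude that $G_n$ has $\tau$ roots in $\{|z|<r\}$ and $n-\tau$ in $\{|z|>R\}$. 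Your route is more elementary and entirely adequate for Theorem~\ref{1934}; the paper's detour through Pellet is unnecessary here but is consistent with the machinery needed for Theorem~\ref{2147}. Your sketch of the Darling argument is fine as an outline, and, as you anticipated, the paper simply cites this as Lemma~\ref{0728}(a).
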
 
 
Consider the index $\tau=\tau_n\in\{0,\dots,n\}$ such that $|\xi_\tau|\geqslant|\xi_j|$ for $j=0,\dots,n$. If it is not unique, we take the minimum one. Let $\omega_1,\dots,\omega_n$ be the complex roots of the system of equations 
$$ 
z^{\tau}+\frac{\xi_{0}}{\xi_{\tau}}=0,\quad z^{n-\tau}+\frac{\xi_{\tau}}{\xi_{n}}=0. 
$$ 
 
\begin{theorem}\label{2147} 
If the distribution of $\log(1+\log(1+|\xi_0|))$ has a slowly varying tail, then for any  $\varepsilon\in(0,1)$ 
$$ 
\mathbf{P}\{F_n(\varepsilon)\}\to1,\quad n\to\infty, 
$$ 
where  $F_n(\varepsilon)$ denotes the event that it is possible to enumerate the roots $z_1,\dots,z_n$ of $G_n$ in such a way that  
$$ 
|z_k-w_k|<\frac\varepsilon n |w_k| 
$$ 
for $k=1,\dots,n$. 
\end{theorem}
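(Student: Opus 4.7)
The plan is to show, with probability tending to one, that $G_n$ is so close to the two sparse binomial polynomials
\[
P_1(z):=\xi_\tau z^\tau+\xi_0,\qquad P_2(z):=\xi_nz^n+\xi_\tau z^\tau
\]
that Rouch\'e's theorem on small disks around each $w_k$ finishes the job. The nonzero roots of $P_1$ are exactly the $\tau$ points $w_k$ on $|z|=\rho_0:=|\xi_0/\xi_\tau|^{1/\tau}$, while those of $P_2$ are the remaining $n-\tau$ points $w_k$ on $|z|=\rho_1:=|\xi_\tau/\xi_n|^{1/(n-\tau)}$. It therefore suffices to prove that on a high-probability event each disk $D_k:=\{z:|z-w_k|<\varepsilon|w_k|/n\}$ contains exactly one root $z_k$ of $G_n$; the $D_k$ on a common circle are disjoint since the chord distance between adjacent $w_k$'s is at least $2|w_k|\sin(\pi/n)\ge 4|w_k|/n>2\varepsilon|w_k|/n$, and the inner and outer circles are well-separated since on our event $\rho_0\ll 1\ll\rho_1$.

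The crucial probabilistic input, which I would isolate as a lemma in Section~\ref{2152}, converts the hypothesis into a strong gap estimate. Write $\eta_j:=\log(1+\log(1+|\xi_j|))$ and $L_j:=\log|\xi_j|$. The slowly varying tail of $\eta_0$ gives, via the standard property $\bar F(t+s)/\bar F(t)\to 1$ of a slowly varying survival function, that $\eta_\tau-\max_{j\ne\tau}\eta_j\to\infty$ in probability. Composing with $x\mapsto e^x-1$ amplifies this into the claim that, for every fixed $K$, the event
\[
\mathcal{A}_n(K):=\bigl\{L_\tau>n^K\max_{j\ne\tau}|L_j|\bigr\}\cap\bigl\{L_\tau>n^K\log n\bigr\}
\]
has probability tending to one. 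This is precisely the step that exploits the \emph{iterated} logarithm in the hypothesis, converting the tail assumption into a gap that is polynomial in $n$.

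On $\mathcal{A}_n(K)$ with $K=3$ I verify Rouch\'e's theorem on each $\partial D_k$. For a small root $w_k$, parametrise $z=w_k(1+\theta)$ with $|\theta|=\varepsilon/n$; the identity $\xi_\tau w_k^\tau=-\xi_0$ and the elementary bound $\tau|\theta|\le\varepsilon<1$ yield $|P_1(z)|=|\xi_0|\,|(1+\theta)^\tau-1|\ge c(\varepsilon)\,\tau\varepsilon|\xi_0|/n$. Each term of the perturbation $G_n-P_1=\sum_{j\ne 0,\tau}\xi_jz^j$ satisfies
\[
|\xi_jz^j|\le e^\varepsilon|\xi_0|\exp\bigl(L_j-L_0-(j/\tau)(L_\tau-L_0)\bigr),
\]
and a short case split on $j\le\tau$ versus $j>\tau$ shows that on $\mathcal{A}_n(3)$ the exponent is at most $-L_\tau/(2n)<-n^2\log n/2$ uniformly in $j\ne 0,\tau$. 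Summing $n$ such terms leaves the total perturbation far below the lower bound on $|P_1(z)|$, and Rouch\'e places exactly one root of $G_n$ in $D_k$. The large roots on $|z|=\rho_1$ are handled identically, either directly with $P_2$ in place of $P_1$ or, more symmetrically, by running the small-root argument on the reciprocal polynomial $z^nG_n(1/z)$, in which $\xi_0,\xi_n$ and $\tau,n-\tau$ are swapped.

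The main obstacle is the probabilistic lemma underlying $\mathcal{A}_n(K)$: one must show that $\eta_\tau-\max_{j\ne\tau}\eta_j$ is already comfortably larger than any constant with high probability, and then carefully unpack the double logarithm, controlling both the small $\eta_j$'s (where $L_j$ can be mildly negative) and the large ones (where the iterated logarithm provides precisely the right amount of separation). Once this estimate is in hand, the remainder is a careful but routine Rouch\'e perturbation of the two binomial equations $z^\tau+\xi_0/\xi_\tau=0$ and $z^{n-\tau}+\xi_\tau/\xi_n=0$.
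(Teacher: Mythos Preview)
Your strategy is correct and genuinely different from the paper's. The paper does not apply Rouch\'e's theorem directly on the disks $D_k$; instead it proves a deterministic Lemma~\ref{1935} which first uses Pellet's theorem (Lemma~\ref{1354}) to trap $n-k$ roots in an outer annulus and then a homotopy argument (Lemma~\ref{1900}) along $g_t(z)=z^n+a_kz^k+t\sum_{j\ne k,n}a_jz^j$ to place one root in each $D_k$. Your direct Rouch\'e computation on $\partial D_k$, comparing $G_n$ with the binomial $P_1$ (resp.\ $P_2$), is more elementary and bypasses Pellet entirely; the paper's route has the advantage that the annular separation of the two groups of roots comes for free. On the probabilistic side your target event $\mathcal{A}_n(K)$ is exactly what the paper establishes, but the paper gets it cleanly from the two halves of Lemma~\ref{0728}: Darling's $M_n/S_n\overset{\mathrm P}{\to}1$ applied to $\eta_j=\log(1+\log(1+|\xi_j|))$ yields $(\sum_{j\ne\tau}\log(1+|\xi_j|))^4\le\log(1+|\xi_\tau|)$ with high probability, and the Kesten--Maller trimmed law applied to $\log(1+|\xi_j|)$ (which has infinite mean) gives $\frac1n\sum_{j\ne\tau}\log(1+|\xi_j|)\to\infty$; combining these yields $\log(1+|\xi_\tau|)\ge n^3\sum_{j\ne\tau}\log(1+|\xi_j|)$ and $\log(1+|\xi_\tau|)\ge n^4$, i.e.\ your $\mathcal{A}_n(3)$ with the sum in place of the max. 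Two small corrections to your sketch: the property ``$\bar F(t+s)/\bar F(t)\to1$'' is the \emph{long-tailed} condition, not slow variation (which is multiplicative, $\bar F(ts)/\bar F(t)\to1$); and ``$\eta_\tau-\max_{j\ne\tau}\eta_j\to\infty$ in probability'' followed by exponentiation only gives $L_\tau/L_{\mathrm{second}}\to\infty$, not the $n^K$ gap you need---you must also argue $\eta_\tau\gg\log n$, which is exactly what the Kesten--Maller input supplies in the paper's argument. The reciprocal-polynomial trick for the inner roots is the same in both proofs.
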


\section{Auxiliary lemmas}\label{2152} 
First we need to formulate and prove some auxiliary results. The following result is due to Pellet. 
\begin{lemma}\label{1354} 
Let $g(z)=\sum_{j}a_jz^j$ be a polynomial of degree $n$.  Suppose for some $k=1,\dots,n-1$ the associated polynomial 
$$ 
\tilde{g}(z)=\sum_{j\ne k}|a_j|z^j-|a_k|z^k 
$$ 
has exactly two positive roots $R$ and $r$, $R>r$. Then $g$ has exactly $k$ roots inside the circle $\{z\in\C\,:\,|z|=r\}$ and $n-k$ roots 
 outside the circle $\{z\in\C\,:\,|z|=R\}$. 
\end{lemma}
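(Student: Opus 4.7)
The plan is to apply Rouché's theorem, comparing $g(z)$ with the monomial $a_k z^k$ on a family of circles whose radii sweep through the open interval $(r,R)$. The whole argument reduces to understanding the sign of $\tilde g$ on the positive real axis.

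First I would analyse $\tilde g$ restricted to $[0,\infty)$. Since $\tilde g(0)=|a_0|\ge 0$ and $\tilde g(t)\to+\infty$ as $t\to\infty$ (because $|a_n|>0$, as $g$ has degree $n$), and since by hypothesis $\tilde g$ has exactly the two positive roots $r<R$, the intermediate value theorem forces
$$
\tilde g(t)<0 \text{ for every } t\in(r,R), \qquad \tilde g(t)\ge 0 \text{ on } [0,r]\cup[R,\infty).
$$
In particular, for every $t\in(r,R)$ we have the strict inequality $\sum_{j\ne k}|a_j|t^j<|a_k|t^k$.

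Then I would fix any such $t$ and any $z$ with $|z|=t$ and estimate, using the triangle inequality,
$$
\bigl|g(z)-a_k z^k\bigr|\;\le\;\sum_{j\ne k}|a_j|\,t^j\;<\;|a_k|\,t^k\;=\;|a_k z^k|.
$$
Rouché's theorem then guarantees that $g(z)$ and $a_k z^k$ have the same number of zeros inside $|z|=t$, namely exactly $k$. Since this holds for every $t\in(r,R)$, there are no zeros of $g$ in the open annulus $r<|z|<R$; letting $t\downarrow r$ gives exactly $k$ zeros of $g$ in $|z|\le r$, and letting $t\uparrow R$ gives exactly $k$ zeros in $|z|<R$, hence $n-k$ zeros in $|z|\ge R$.

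The only delicate point is to ensure that the strict inequality in Rouché really does hold on \emph{some} circle in the annulus — equivalently, to rule out the degenerate scenario in which $\tilde g$ is nonnegative throughout $[0,\infty)$ and merely touches $0$ at $r=R$. This is exactly where the hypothesis that $\tilde g$ has \emph{two distinct} positive roots is used, so there is no genuine obstacle beyond a careful use of the sign analysis and a standard limiting argument as $t$ approaches $r$ and $R$ from inside the annulus.
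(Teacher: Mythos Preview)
Your Rouch\'e argument is correct and is the standard proof of Pellet's theorem; the paper gives no proof of its own here, only a citation to Rahman--Schmeisser, where essentially this argument appears. One small point worth making explicit in your sign analysis: to conclude $\tilde g<0$ on $(r,R)$ you need $r$ and $R$ to be \emph{simple} zeros of $\tilde g$. This follows because the coefficient sequence of $\tilde g$ has at most two sign changes, so Descartes' rule bounds the number of positive roots by two \emph{counting multiplicity}; since $r<R$ are distinct, both are simple and $\tilde g$ genuinely changes sign at each. Your limiting conclusion --- exactly $k$ roots in $\{|z|\le r\}$ and $n-k$ in $\{|z|\ge R\}$ --- is in fact the sharp form of the statement: roots on the bounding circles can occur, as the example $g(z)=z^2-3z+2$ (with $k=1$, $r=1$, $R=2$, and roots at $1$ and $2$) shows.
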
 
\begin{proof} 
See, e.g., \cite{RS02}.  
\end{proof} 
 
The next lemma is due to Ostrowski. 
\begin{lemma}\label{1900} 
Let $B$ be a closed region in the complex plane,
 the boundary of which consists of a finite number of regular arcs; 
let the functions $f(z), h(z)$ be regular on $B$.
 Assume that for all values of the real parameter $t$, running in the interval $a\leqslant t\leqslant b$, the function $f(z)+t\cdot h(z)$ is non zero on the boundary of $B$. Then the number of the roots of $f(z)+t\cdot h(z)$  inside $B$ is independent of $t$ for $a\leqslant t\leqslant b$. 
\end{lemma}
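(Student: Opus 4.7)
The plan is to invoke the argument principle and reduce the claim to the continuity of an integer-valued function. For each $t \in [a,b]$, let $N(t)$ denote the number of zeros of $f + t\cdot h$ inside $B$, counted with multiplicity. Since $f$ and $h$ are regular on $B$ (and hence so is $f + t\cdot h$), and $f + t\cdot h$ does not vanish on $\partial B$ by hypothesis, the argument principle gives
\begin{equation*}
N(t) \;=\; \frac{1}{2\pi i}\oint_{\partial B} \frac{f'(z) + t\, h'(z)}{f(z) + t\, h(z)}\, dz,
\end{equation*}
where the boundary is traversed in the positive direction.

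Next, I would show that $N(t)$ varies continuously in $t$. The set $\partial B \times [a,b]$ is compact, and the continuous function $(z,t) \mapsto |f(z) + t\, h(z)|$ is strictly positive there by hypothesis, hence bounded below by some $\eta > 0$. Consequently the integrand in the display above is jointly continuous in $(z,t)$ and uniformly bounded on $\partial B \times [a,b]$, so one may pass to the limit under the integral sign and conclude that $t \mapsto N(t)$ is continuous on $[a,b]$. Since $N$ maps the connected interval $[a,b]$ into $\mathbb{Z}_{\geq 0}$, continuity forces it to be constant, which is precisely the claim.

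The only mildly delicate point is justifying the argument principle for a region whose boundary is a finite union of regular arcs rather than a single smooth simple closed curve; this reduces to the standard version by orienting $\partial B$ as a positively oriented $1$-cycle around $B$ and subdividing it into smooth pieces so that Cauchy's theorem applies on each piece. With that bookkeeping in place, the remainder is a textbook combination of contour-integral continuity and integer-valuedness, so I do not expect any further obstacles.
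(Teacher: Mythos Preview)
Your argument is correct: the argument principle expresses $N(t)$ as a contour integral, the hypothesis that $f+th$ is nonvanishing on $\partial B$ for all $t\in[a,b]$ gives a uniform lower bound on the denominator (by compactness of $\partial B\times[a,b]$), hence $N(t)$ is continuous and integer-valued, so constant. Your caveat about the boundary being a finite union of regular arcs is also handled correctly; the argument principle extends to such regions in the standard way.

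As for comparison with the paper: the paper does not actually prove this lemma but simply attributes it to Ostrowski and cites his book \emph{Solution of equations in Euclidean and Banach spaces}. Your proof is the natural one and is essentially what lies behind Ostrowski's statement; it has the advantage of being self-contained and short, so including it would make the exposition more complete without adding real length.
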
 
\begin{proof} 
See \cite{aO73}. 
\end{proof} 
 
\begin{lemma}\label{1935} 
Consider a monic  polynomial of degree $n$ with complex coefficient  $g(z)=\sum_{j}a_jz^j$ such that $a_n=1,a_0\ne0$. Fix some $k=1,\dots,n-1$ and denote by $w_1,\dots,w_{n-k}$ the roots of the equation $z^{n-k}+a_k=0$. Put  
$$ 
A_k=\sum_{j\ne k}|a_j|. 
$$  
If for some $\varepsilon>0$ 
\begin{equation}\label{1817} 
A_k\leqslant\left(1-\frac\varepsilon n\right)\left(\frac\varepsilon{n+\varepsilon}\right)^{n-k}|a_k|^{1/(n-k)}, 
\end{equation} 
then $g$ has exactly $n-k$ roots $z_1,\dots,z_{n-k}$ outside the unit circumference and it is possible to enumerate these roots in such a way that  
$$ 
|z_j-w_j|\leqslant\frac\varepsilon n|w_j| 
$$ 
for $j=1,\dots,n-k$. 
\end{lemma}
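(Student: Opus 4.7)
Split $g = f + h$ with $f(z) := z^n + a_k z^k = z^k(z^{n-k} + a_k)$, whose nonzero roots are exactly the $w_1, \ldots, w_{n-k}$ on the circle of radius $\rho := |a_k|^{1/(n-k)}$, and $h(z) := \sum_{j \ne k,n} a_j z^j$. Since $A_k \ge |a_n| = 1$, hypothesis \eqref{1817} forces $\rho$ to be at least $((n+\varepsilon)/\varepsilon)^{n-k}$. In particular, the disks $B_j := \{z : |z - w_j| \le (\varepsilon/n)|w_j|\}$ are pairwise disjoint (their diameter $2(\varepsilon/n)\rho$ is dwarfed by the minimum pairwise distance $2\rho\sin(\pi/(n-k))$ between the $w_j$), miss the origin, and lie outside the unit circle. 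To count the outer roots of $g$, apply Lemma~\ref{1354} with the same index $k$: the associated polynomial satisfies $\tilde g(0) = |a_0| > 0$, $\tilde g(+\infty) = +\infty$, and $\tilde g(1) = A_k - |a_k| < 0$ (since $|a_k| = \rho^{n-k}$ vastly exceeds $A_k$); Descartes' rule of signs caps the positive zeros at two, and the intermediate value theorem supplies two, so there are exactly two positive roots $r < 1 < R$. Pellet's lemma then yields exactly $n-k$ roots of $g$ outside $\{|z| = R\}$, hence outside the unit circle.

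\textbf{Localization via Ostrowski.} Consider the linear homotopy $g_t := f + th$ for $t \in [0, 1]$. By Lemma~\ref{1900} applied to each $B_j$, the number of roots of $g_t$ in $B_j$ is independent of $t$ provided $g_t$ does not vanish on $\partial B_j$; it then equals the unique root $w_j$ of $g_0 = f$ in $B_j$, so $g_1 = g$ has exactly one root in each $B_j$. Combined with the Pellet count, these $n-k$ roots (one per $B_j$) exhaust the outer roots of $g$. The required non-vanishing of $g_t$ on $\partial B_j$ reduces to the pointwise inequality $|f(z)| > |h(z)|$ there.

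\textbf{Main obstacle.} The crux is verifying $|f(z)| > |h(z)|$ on $\partial B_j$. Parameterizing $z = w_j(1+u)$ with $|u| = \varepsilon/n$ and using $w_j^{n-k} = -a_k$, one has $|f(z)| = |z|^k |a_k| \cdot |(1+u)^{n-k} - 1|$. I plan to lower-bound $|(1+u)^{n-k} - 1|$ via the factorization
\[
(1+u)^{n-k} - 1 = u \prod_{\zeta^{n-k}=1,\ \zeta \ne 1}(1+u-\zeta),
\]
using $|1+u-\zeta| \ge |1-\zeta| - \varepsilon/n \ge |1-\zeta|/2$ (since $\varepsilon/n$ lies well below the minimum spacing $2\sin(\pi/(n-k))$ between roots of unity) together with the identity $\prod_{\zeta \ne 1}|1-\zeta| = n-k$; this gives $|(1+u)^{n-k} - 1| \ge (\varepsilon/n)(n-k)/2^{n-k-1}$. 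Coupling it with the elementary estimate $|h(z)| \le A_k \max_{j' \le n-1,\, j' \ne k}|z|^{j'}$ and invoking the exponentially small factor $(\varepsilon/(n+\varepsilon))^{n-k}$ from \eqref{1817}, which comfortably absorbs the $2^{n-k-1}$ loss in the product bound, should yield $|f(z)| > |h(z)|$ on $\partial B_j$ and thereby complete the proof.
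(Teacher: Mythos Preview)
Your approach is sound and leads to a correct proof, but the localization step differs from the paper's. You attack $|f|>|h|$ on $\partial B_j$ directly via the factorization $(1+u)^{n-k}-1=u\prod_{\zeta\ne1}(1+u-\zeta)$, losing a factor $2^{n-k-1}$ that is indeed absorbed by the $(\varepsilon/(n+\varepsilon))^{n-k}$ in the hypothesis (under the same implicit constraint $\varepsilon/n<\sin(\pi/(n-k))$ that both you and the paper invoke for disjointness). The paper instead applies Pellet's lemma to the \emph{entire} family $g_t$, proving by contradiction that the larger Pellet radius satisfies $R_t\ge(1-\varepsilon/n)|a_k|^{1/(n-k)}$ for every $t$; then for any outer root $z_0$ of $g_t$ one has $\prod_{j}|z_0-w_j|=|z_0^{n-k}+a_k|\le A_k|z_0|^{n-k-1}$, and taking the $(n-k)$-th root together with $|z_0|\ge(1-\varepsilon/n)\rho$ and \eqref{1817} yields $|z_0-w_l|<(\varepsilon/n)|w_l|$ for some $l$. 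Thus every outer root of every $g_t$ already sits in some $B_m$, so non-vanishing on $\partial B_m$ comes for free and Ostrowski finishes. The paper's route is cleaner---no constant-chasing---and makes transparent why the specific constant $(1-\varepsilon/n)(\varepsilon/(n+\varepsilon))^{n-k}$ appears in the hypothesis: it is exactly what the geometric-mean step needs. Your Rouch\'e computation works but uses the hypothesis only as a crude upper bound with room to spare.
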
 
 
\begin{proof} 
We will prove a stronger version of the Lemma~\ref{1935}. Namely, we will show that the statement holds for the family of polynomials 
$$ 
g_t(z)=z^n+a_kz^k+t\sum_{j\ne k,n}a_jz^j,\quad 0\leqslant t\leqslant1. 
$$ 
In particular, 
$$ 
g_0(z)=z^n+a_kz^k,\quad g_1(z)=g(z). 
$$ 
 
Let us use Lemma \ref{1354} to estimate absolute values of the roots of $g_t$. Consider the associated polynomial 
$$ 
\tilde{g_t}(z)=z^n-|a_k|z^k+t\sum_{j\ne k,n}|a_j|z^j. 
$$ 
We have $\tilde{g_t}(0),\tilde{g_t}(\infty)>0$ and it follows from \eqref{1817} that $\tilde{g_t}(1)<1$. Also, by Descarte's rule of signs, $\tilde{g_t}$  has at most 2 positive roots. Therefore $\tilde{g_t}$  has exactly 2 positive roots  $r_t$ and $R_t$  such that 
\begin{equation}\label{9} 
0<r_t<1<R_t. 
\end{equation} 
 
Now let us show that 
\begin{equation}\label{10} 
\left(1-\frac\varepsilon n\right)|a_k|^{1/(n-k)}\leqslant R_t\leqslant |a_k|^{1/(n-k)}. 
\end{equation} 
Since $\tilde{g_t}(R_t)=0$, we have 
\begin{equation}\label{11} 
R_t^{n-k}+t\sum_{j\ne k,n}|a_j|R_t^{j-k}=|a_k|, 
\end{equation} 
which proves the right side of  \eqref{10}.  
 
We prove the left side by contradiction. Suppose, on the contrary,  that 
$$ 
R_t<\left(1-\frac\varepsilon n\right)|a_k|^{1/(n-k)}. 
$$ 
Then 
\begin{multline*} 
R_t^{n-k}+t\sum_{j\ne k,n}|a_j|R_t^{j-k}<\left(1-\frac\varepsilon n\right)^{n-k}|a_k|+A_kR_t^{n-k-1} 
\\\leqslant\left(1-\frac\varepsilon n\right)^{n-k}|a_k|+A_k\left(1-\frac\varepsilon n\right)^{n-k-1}|a_k|^{1-\frac{1}{n-k}} 
\\=\left(1-\frac\varepsilon n\right)^{n-k}|a_k|+\frac{A_k}{|a_k|^{1/(n-k)}}\left(1-\frac\varepsilon n\right)^{n-k-1}|a_k|. 
\end{multline*} 
It follows from \eqref{1817} that  
$$ 
\frac{A_k}{|a_k|^{1/(n-k)}}\leqslant\frac\varepsilon n, 
$$ 
therefore, 
\begin{multline*} 
R_t^{n-k}+t\sum_{j\ne k,n}|a_j|R_t^{j-k}<\left(1-\frac\varepsilon n\right)^{n-k}|a_k|+\frac{\varepsilon}{n}\left(1-\frac\varepsilon n\right)^{n-k-1}|a_k|\\ 
=\left(1-\frac\varepsilon n\right)^{n-k-1}|a_k|\leqslant|a_k|, 
\end{multline*} 
which contradicts with \eqref{11}. Thus \eqref{10} is proved. 
 
It follows from  \eqref{9}, \eqref{10} and the Lemma~\ref{1354} that  $k$ roots of $g_t$ lie inside the circle $\{z\in\C\,:\,|z|=1\}$ and the other $n-k$ -- outside the circle $\{z\in\C\,:\,|z|=(1-\varepsilon/n)|a_k|^{1/(n-k)}\}$ for all $t\in[0,1]$.  
 
Let $z_0$ be a root of $g_t$ from the second group, i.e., 
\begin{equation}\label{1831} 
|z_0|>\left(1-\frac\varepsilon n\right)|a_k|^{1/(n-k)}. 
\end{equation} 
We have 
\[ 
|z_0^n+a_kz_0^k|=t\cdot\Big|\sum_{j\ne k,n}a_jz_0^j\Big|\leqslant A_kz_0^{n-1}, 
\] 
which leads to 
\[ 
\prod_{j=1}^{n-k}|z_0-w_j|\leqslant A_k|z_0|^{n-k-1}. 
\] 
This implies that there exists an index $l$ such that 
$$ 
|z_0-w_l|\leqslant \left(\frac{A_k}{|z_0|}\right)^{1/(n-k)}|z_0|. 
$$ 
Combining this with \eqref{1817} and \eqref{1831}  we obtain 
\begin{multline*} 
|z_0-w_l|<\left(\frac{A_k}{(1-\varepsilon/n)|a_k|^{1/(n-k)}}\right)^{1/(n-k)}|z_0|\\\leqslant\frac\varepsilon{n+\varepsilon}|z_0| 
\leqslant\frac\varepsilon{n+\varepsilon}|w_l|+\frac\varepsilon{n+\varepsilon}|z_0-w_l|, 
\end{multline*} 
which produces 
$$ 
|z_0-w_l|<\frac\varepsilon n|w_l|=\frac\varepsilon n|a_k|^{1/(n-k)}. 
$$

It means that all roots of $g_t$ from the second group belong to $\cup_{m=1}^{n-k}B_m$, where $B_m=\{z\in\C\,:\,|z-w_m|<\varepsilon|w_m|/n\}$. Since  $\varepsilon/n<\sin[\pi/(n-k)]$,  all $B_1,\dots,B_{n-k}$ are disjoint. Therefore $g_t$ does not vanish on the boundary of $B_m$ for all $t\in[0,1],m=1,\dots,n-k$. To conclude the proof, it remains to show that every $B_m$ contains  exactly one root of $g_t$. Obviously, this is true for $t=0$. Therefore, by Lemma \ref{1900}, this is also true for all $t\in[0,1]$. 
\end{proof} 
 
\begin{lemma}\label{0728} 
Let $\{\eta_j\}_{j=0}^\infty$ be non-negative i.i.d. random variables. Put 
$$ 
S_n=\sum_{j}\eta_j,\quad M_n=\max\{\eta_j\}_{j=0}^n. 
$$  
\begin{itemize} 
\item[(a)] \label{2034} The distribution of $\eta_0$ has a slowly varying tail if and only if 
$$ 
\frac{M_n}{S_n}\overset{\text{P}}{\to}1,\quad n\to\infty. 
$$ 
\item[(b)] \label{2035} The distributin of $\eta_0$ has an infinite mean if an only if 
$$ 
\frac{S_n-M_n}{n}\overset{\text{a.s.}}{\to}\infty,\quad n\to\infty. 
$$ 
\end{itemize} 
\end{lemma}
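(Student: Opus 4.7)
Part (b) is elementary. For the ``if'' direction, assume $\E[\eta_0] = \infty$ and fix $C > 0$. With $\tilde\eta_j = \eta_j \wedge C$ and $\tau$ the index at which $M_n$ is attained,
\[
S_n - M_n = \sum_{j \ne \tau} \eta_j \geq \sum_{j \ne \tau} \tilde\eta_j \geq \sum_j \tilde\eta_j - C.
\]
The strong law gives $n^{-1}\sum_j \tilde\eta_j \to \E[\eta_0 \wedge C]$ almost surely, hence $\liminf_n (S_n-M_n)/n \geq \E[\eta_0 \wedge C]$ a.s., and monotone convergence as $C \to \infty$ yields divergence to $+\infty$. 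The converse is immediate: if $\E[\eta_0] < \infty$ the SLLN gives $S_n/n \to \E[\eta_0]$ a.s., which bounds $(S_n - M_n)/n$ from above.

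Part (a) is the classical Darling--Feller characterization of slowly varying tails. For the ``if'' direction, fix $\varepsilon > 0$ and a small $\lambda \in (0,\varepsilon)$; the goal is $\P\{S_n - M_n > \varepsilon M_n\} \to 0$. Condition on $M_n = m$ and on the index $\tau$: the remaining $\eta_j$ are then i.i.d., distributed as $\eta_0$ conditioned on $\eta_0 \leq m$. Split $S_n - M_n$ according to whether $\eta_j \leq \lambda m$ or not. For the bulk, apply Markov's inequality together with Karamata's theorem, which for slowly varying $\bar F(x) = \P\{\eta_0 > x\}$ controls the truncated first moment $\E[\eta_0 \ind_{\{\eta_0 \leq \lambda m\}}]$. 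For the exceedances above $\lambda m$ a union bound suffices: since $m$ is typically on the scale where $n\bar F(m) = O(1)$ and slow variation gives $\bar F(\lambda m) \sim \bar F(m)$, with high probability only $\tau$ itself contributes an exceedance.

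For the converse, if $\bar F$ is not slowly varying, one can construct via regular-variation arguments a subsequence $n_k$ and scales $x_k \to \infty$ such that, with probability bounded away from $0$, two of the $\eta_j$ with $j \leq n_k$ fall in a common window of order $x_k$; this precludes $M_n/S_n \to 1$ in probability. The characterization is classical and may alternatively be invoked directly.

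The principal obstacle is the ``if'' half of (a), where one must handle the bulk and tail contributions to $S_n - M_n$ simultaneously at the correct truncation level, exploiting slow variation to compare $\bar F(\lambda m)$ with $\bar F(m)$. Part (b), by contrast, reduces cleanly to truncation plus the SLLN.
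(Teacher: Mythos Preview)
The paper does not actually prove this lemma: it simply cites Darling (1952) for part~(a) and Kesten--Maller (1995, Theorem~2.1) for part~(b). So your write-up is already more self-contained than the original.

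Your argument for (b) is correct and pleasantly elementary; it avoids the trimmed-sum machinery of Kesten--Maller entirely, which is a genuine gain here since only this very special case is needed.

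For (a), your forward-direction sketch is along the right lines but one step should be sharpened. After conditioning on $M_n=m$ and the index $\tau$, the remaining variables are i.i.d.\ with law $\eta_0\mid\eta_0\le m$, so the probability that one of them exceeds $\lambda m$ is $(\bar F(\lambda m)-\bar F(m))/(1-\bar F(m))$, not $\bar F(\lambda m)$. It is precisely the fact that slow variation gives $\bar F(\lambda m)-\bar F(m)=o(\bar F(m))$ (rather than merely $\bar F(\lambda m)\sim\bar F(m)$) that makes the union bound $n\cdot o(\bar F(m))=o(1)$ on the typical set $\{n\bar F(M_n)=O(1)\}$; as written, the reader might think you are bounding by $n\bar F(\lambda m)\sim n\bar F(m)=O(1)$, which would not suffice. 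The bulk estimate via Karamata is fine. Your converse for (a), however, is not a proof but a gesture toward one; the ``two large values in a common window'' construction needs real work (this is exactly the nontrivial half of Darling's theorem). Since you already concede the result is classical, the honest move is to cite Darling for the converse, which is what the paper does for both directions.
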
 
\begin{proof} 
For (a) see \cite{dD52}, for (b) see \cite[Theorem 2.1]{kM1995}. 
\end{proof} 
 
\begin{lemma}\label{1918} 
Suppose $a_0,a_1,\dots,a_n\geqslant0$ and $\varepsilon>0$. If for some $k=1,\dots,n-1$ 
$$ 
\prod_{j\ne k}(1+a_j)^{2n^2}\leqslant1+a_k 
$$ 
and  
\begin{equation}\label{1554} 
a_k\geqslant2(1-\varepsilon)^{-4n^2/(4n-1)}\varepsilon^{-4n^3/(4n-1)}(n+\varepsilon)^{4n^3/(4n-1)}, 
\end{equation} 
then 
$$ 
\sum_{j\ne k}a_j+1\leqslant\left(1-\frac\varepsilon n\right)\left(\frac\varepsilon{n+\varepsilon}\right)^{n-k}a_k^{1/(n-k)}. 
$$ 
\end{lemma}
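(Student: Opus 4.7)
The plan is to chain two elementary bounds. First, expanding the product of nonnegative terms gives $\prod_{j\ne k}(1+a_j)\geqslant 1+\sum_{j\ne k}a_j$, so
$$\sum_{j\ne k}a_j+1\;\leqslant\;\prod_{j\ne k}(1+a_j).$$
Second, taking the $(2n^2)$-th root of the first hypothesis yields $\prod_{j\ne k}(1+a_j)\leqslant(1+a_k)^{1/(2n^2)}$. Combining these, the conclusion of the lemma reduces to the single inequality
$$(1+a_k)^{1/(2n^2)}\;\leqslant\;\left(1-\frac{\varepsilon}{n}\right)\left(\frac{\varepsilon}{n+\varepsilon}\right)^{n-k}a_k^{1/(n-k)}.$$

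Since \eqref{1554} forces $a_k\geqslant 2$, one can replace $1+a_k$ by $2a_k$ on the left and then isolate $a_k$. Writing $m=n-k\in\{1,\dots,n-1\}$ and raising both sides to the $(2n^2)$-th power, the inequality above becomes
$$2a_k\;\leqslant\;(1-\varepsilon/n)^{2n^2}\bigl(\varepsilon/(n+\varepsilon)\bigr)^{2n^2m}a_k^{2n^2/m};$$
since $2n^2>m$, rearranging produces the explicit lower bound
$$a_k\;\geqslant\;2^{m/(2n^2-m)}(1-\varepsilon/n)^{-2n^2 m/(2n^2-m)}\varepsilon^{-2n^2 m^2/(2n^2-m)}(n+\varepsilon)^{2n^2 m^2/(2n^2-m)}.$$

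The remaining task is to show that \eqref{1554} implies this lower bound for every $m\in\{1,\dots,n-1\}$. A direct differentiation shows that each of the three exponents $m/(2n^2-m)$, $2n^2m/(2n^2-m)$ and $2n^2m^2/(2n^2-m)$ is an increasing function of $m$ on this range, so each is maximized at $m=n-1$. Combined with the trivial estimate $1-\varepsilon/n\geqslant 1-\varepsilon$ (which only enlarges the corresponding factor), it therefore suffices to verify
$$\frac{n-1}{2n^2-n+1}\leqslant 1,\qquad \frac{2n^2(n-1)}{2n^2-n+1}\leqslant\frac{4n^2}{4n-1},\qquad \frac{2n^2(n-1)^2}{2n^2-n+1}\leqslant\frac{4n^3}{4n-1}.$$
The main obstacle is this final polynomial bookkeeping: the first estimate is immediate and the latter two reduce after cross-multiplication to $3n+1\geqslant 0$ and $-7n^2+4n-1\leqslant 0$, both valid for all $n\geqslant 1$. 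These checks are precisely what fix the numerical constants $4n^2/(4n-1)$ and $4n^3/(4n-1)$ appearing in \eqref{1554}.
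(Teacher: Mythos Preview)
Your proof is correct and follows essentially the same two-step scheme as the paper: bound $1+\sum_{j\ne k}a_j$ by $\prod_{j\ne k}(1+a_j)\leqslant(1+a_k)^{1/(2n^2)}\leqslant(2a_k)^{1/(2n^2)}$, and then reduce the remaining inequality to a lower bound on $a_k$ that is implied by \eqref{1554}. The only difference is cosmetic: the paper passes immediately to the uniform-in-$k$ sufficient inequality $(2a_k)^{1/(2n)^2}\leqslant(1-\varepsilon)\bigl(\varepsilon/(n+\varepsilon)\bigr)^n a_k^{1/n}$ and declares it equivalent to \eqref{1554}, whereas you keep the parameter $m=n-k$, maximize the resulting threshold over $m\in\{1,\dots,n-1\}$, and verify the exponent comparisons explicitly. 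Your more careful bookkeeping is exactly what is needed to pin down the constants $4n^2/(4n-1)$ and $4n^3/(4n-1)$ in \eqref{1554}.
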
 
\begin{proof} 
Since $1+\sum_{j\ne k}a_j\leqslant\prod_{j\ne k}(1+a_j)$, 	it suffices to show that 
$$ 
(2a_k)^{1/(2n)^2}\leqslant(1-\varepsilon)\left(\frac\varepsilon{n+\varepsilon}\right)^{n} a_k^{1/n}, 
$$ 
which is equivalent to \eqref{1554}. 
\end{proof}

\section{Proof of theorems}\label{2208} 
\begin{proof}[Proof of Theorem~\ref{1934}] 
By Lemma \ref{0728} (a), for any $\delta>0$ we have $\P\{A_n\}\to1,n\to\infty$, where 
$$ 
A_n=\left\{|\xi_\tau|>e^{\delta}\sum_{j\ne\tau}|\xi_j|\right\}. 
$$ 
 
Consider the associated polynomial 
$$ 
\tilde{G}(z)=\sum_{j\ne\tau}|\xi_j|z^j-|\xi_\tau|z^\tau. 
$$ 
 
Suppose $A_n$ occurs. If $1\leqslant t\leqslant e^{\delta/n}$, then 
$$ 
|\xi_\tau t^\tau|>e^{\delta}\sum_{j\ne\tau}|\xi_j|\geqslant t^n\sum_{j\ne\tau}|\xi_j|\geqslant\Big|\sum_{j\ne\tau}t^j\xi_j\Big|. 
$$ 
If $e^{-\delta/n}\leqslant t\leqslant1$, then 
$$ 
|\xi_\tau t^\tau|\geqslant e^{-\delta}|\xi_\tau|>\sum_{j\ne\tau}|\xi_j|\geqslant\Big|\sum_{j\ne\tau}t^j\xi_j\Big|. 
$$ 
Therefore  $\tilde{G}$ does not have real roots in the interval $[e^{-\delta/n},e^{\delta/n}]$. Further, $\tilde{G}(0)>0,\tilde{G}(\infty)>0$, and $\tilde{G}(1)<0$. By Descarte's rule of signs $\tilde{G}$  has at most 2 positive roots. Thus $\tilde{G}$  has exactly 2 positive roots  $r$ and $R$  such that 
$$ 
0<r<e^{-\delta/n}<e^{\delta/n}<R. 
$$ 
 
By Lemma \ref{1354},  $G$ has exactly $\tau$ roots inside the circle $\{z\in\C : |z|=e^{-\delta/n}\}$ and $n-\tau$ roots 
 outside the circle $\{z\in\C : |z|=e^{\delta/n}\}$. Therefore, $A_n$ implies that $R_n(e^{-\delta/n},e^{\delta/n})=0$ which concludes the proof. 
\end{proof}

\begin{proof}[Proof of Theorem~\ref{2147}] 
Consider the events 
$$ 
A_n=\left\{\prod_{j\ne\tau}\left(1+\frac{|\xi_j|}{|\xi_n|}\right)^{2n^2}\leqslant1+\frac{|\xi_\tau|}{|\xi_n|}\right\} 
$$ 
and 
$$ 
B_n=\left\{\frac{|\xi_\tau|}{|\xi_n|}\geqslant2(1-\varepsilon)^{-4n^2/(4n-1)}\varepsilon^{-4n^3/(4n-1)}(n+\varepsilon)^{4n^3/(4n-1)}\right\}. 
$$ 
Since the distribution of $\log(1+\log(1+|\xi_0|))$ has a slowly varying tail, by Lemma \ref{0728} (a), 
$$ 
\mathbf{P}\Bigg\{4\cdot\sum_{j\ne\tau}\log(1+\log(1+|\xi_j|))\leqslant\log(1+\log(1+|\xi_{\tau}|))\Bigg\}\to 1,\quad n\to\infty, 
$$ 
which implies 
\begin{equation}\label{35} 
\mathbf{P}\Bigg\{\bigg(\sum_{j\ne\tau}\log(1+|\xi_j|)\bigg)^4\leqslant\log(1+|\xi_{\tau}|)\Bigg\}\to 1,\quad n\to\infty. 
\end{equation} 
Since $\mathbf{E}\log(1+|\xi_0|)=\infty$, by Lemma \ref{0728} (b) with probability one  
$$ 
\frac1n\sum_{j\ne\tau}\log(1+|\xi_j|)\to\infty,\quad n\to\infty, 
$$ 
which together with \eqref{35} produces 
$$ 
\mathbf{P}\left\{n^3\cdot\sum_{j\ne\tau}\log(1+|\xi_j|)\leqslant\log(1+|\xi_{\tau}|)\right\}\to 1,\quad n\to\infty, 
$$ 
and 
$$ 
\mathbf{P}\left\{\log(1+|\xi_{\tau}|)\geqslant n^4\right\}\to 1,\quad n\to\infty. 
$$ 
Since for any $\delta>0$ there exists $T>0$ such that $\P\{T^{-1}<|\xi_n|<T\}>1-\delta$, the last two inequalities imply 
$$ 
\P\{A_n\},\P\{B_n\}\to1,\quad n\to\infty. 
$$ 
 
By Lemma~\ref{1918}, the event $A_n\cap B_n$ implies that the polynomial $G_n(z)/|\xi_n|$ satisfies the conditions of Lemma~\ref{1935}.  Thus we have proved the theorem for the roots of $G_n$ lying outside the unit circumference. To treat  the rest of the roots consider the associated polynomial  
$$ 
G_n^*(z)=z^nG(1/z)=\sum_{j}\xi_{j}z^{n-j} 
$$ 
and note that $z_0$ is a root of $G_n$ if and only if $z^{-1}$ is a root of $G_n^*(z)$.

\end{proof}

\end{document}